\newcommand{\bl}[1]{\textcolor{blue}{#1}}
\definecolor{mypurple}{rgb}{.4,.0,.5}
\def\x{{\bf x}}
\def\x{{\mathbf x}}
\def\x{{\bf x}}
\def\a{{\bf a}}
\def\b{{\bf b}}
\def\c{{\bf c}}
\def\h{{\bf h}}
\def\be{\begin{equation}}
\def\ee{\end{equation}}
\def\ba{\left[\begin{array}}
\def\ea{\end{array}\right]}
\def\x{{\bf x}}
\def\a{{\bf a}}
\def\b{{\bf b}}
\def\c{{\bf c}}
\def\1{{\bf 1}}
\def\g{{\bf g}}
\def\0{{\bf 0}}
\def\erfc{\mbox{erfc}}
\def\mR{{\mathbb R}}
\def\mE{{\mathbb E}}
\def\mP{{\mathbb P}}
\def\lp{\left (}
\def\rp{\right )}
\def\x{{\bf x}}
\def\x{{\mathbf x}}
\def\x{{\bf x}}
\def\a{{\bf a}}
\def\b{{\bf b}}
\def\c{{\bf c}}
\def\h{{\bf h}}
\def\be{\begin{equation}}
\def\ee{\end{equation}}
\def\ba{\left[\begin{array}}
\def\ea{\end{array}\right]}
\def\x{{\bf x}}
\def\a{{\bf a}}
\def\b{{\bf b}}
\def\c{{\bf c}}
\def\({\left (}
\def\){\right )}
\def\1{{\bf 1}}
\def\g{{\bf g}}
\def\0{{\bf 0}}
\definecolor{darkgreen}{rgb}{0, 0.4,0}
\definecolor{purplebrown}{rgb}{0.5,0.1,0.6}
\definecolor{ultclupcol}{rgb}{0.1,0.5,0.5}
\definecolor{mytrycolor}{rgb}{0.5,0.7,0.2}
\definecolor{ultclupcola}{rgb}{.5,0,.5}
\definecolor{shadebrown}{rgb}{0.1,0.1,0.9}
\definecolor{lightblue}{rgb}{0.2,0,1}
\newtcbox{\xmybox}{on line,
arc=7pt,
before upper={\rule[-3pt]{0pt}{10pt}},boxrule=0pt,
boxsep=0pt,left=6pt,right=6pt,top=0pt,bottom=0pt,enhanced, coltext=blue, colback=white!10!yellow}
\newtcbox{\xmyboxa}{on line,
arc=7pt,
before upper={\rule[-3pt]{0pt}{10pt}},boxrule=0pt,
boxsep=0pt,left=6pt,right=6pt,top=0pt,bottom=0pt,enhanced, colback=white!10!yellow}
\newtcbox{\xmyboxb}{on line,
arc=7pt,
before upper={\rule[-3pt]{0pt}{10pt}},boxrule=1pt,colframe=darkgreen!100!blue,
boxsep=0pt,left=6pt,right=6pt,top=0pt,bottom=0pt,enhanced, colback=white!10!yellow}
\newtcbox{\xmyboxc}{on line,
arc=7pt,
before upper={\rule[-3pt]{0pt}{10pt}},boxrule=.7pt,colframe=blue!100!blue,
boxsep=0pt,left=6pt,right=6pt,top=0pt,bottom=0pt,enhanced, coltext=blue, colback=white!10!yellow}
\newtcbox{\xmytboxa}{on line,
arc=7pt,
before upper={\rule[-3pt]{0pt}{10pt}},boxrule=.0pt,colframe=pink!50!yellow,
boxsep=0pt,left=6pt,right=6pt,top=0pt,bottom=0pt,enhanced, coltext=white, colback=blue!40!red}
\newtcbox{\xmytboxb}{on line,
arc=7pt,
before upper={\rule[-3pt]{0pt}{10pt}},boxrule=.0pt,colframe=pink!50!yellow,
boxsep=0pt,left=6pt,right=6pt,top=0pt,bottom=0pt,enhanced, coltext=white, colback=white!40!green}
\newcommand\subsubsubsection{\@startsection{paragraph}{4}{\z@}{-2.5ex\@plus -1ex \@minus -.25ex}{1.25ex \@plus .25ex}{\normalfont\normalsize\bfseries}}
\newcommand\subsubsubsubsection{\@startsection{subparagraph}{5}{\z@}{-2.5ex\@plus -1ex \@minus -.25ex}{1.25ex \@plus .25ex}{\normalfont\normalsize\bfseries}}
\newtheorem{theorem}{Theorem}
\newtheorem{lemma}{Lemma}
\begin{document}

\begin{singlespace}

\title {Exact objectives of random linear programs and mean widths of random polyhedrons 
}
\author{
\textsc{Mihailo Stojnic
\footnote{e-mail: {\tt flatoyer@gmail.com}} }}
\date{}
\maketitle

\centerline{{\bf Abstract}} \vspace*{0.1in}

We consider \emph{random linear programs} (rlps) as a subclass  of \emph{random optimization problems} (rops) and study their typical behavior. Our particular focus is on appropriate linear objectives which connect the rlps to the mean widths of random polyhedrons/polytopes. Utilizing the powerful machinery of \emph{random duality theory} (RDT) \cite{StojnicRegRndDlt10}, we obtain, in a large dimensional context, the exact characterizations of the program's objectives. In particular, for any $\alpha=\lim_{n\rightarrow\infty}\frac{m}{n}\in(0,\infty)$, any unit vector $\mathbf{c}\in{\mathbb R}^n$, any fixed $\mathbf{a}\in{\mathbb R}^n$, and $A\in {\mathbb R}^{m\times n}$ with iid standard normal entries, we have
\begin{eqnarray*}
 \lim_{n\rightarrow\infty}{\mathbb P}_{A} \lp  (1-\epsilon) \xi_{opt}(\alpha;\mathbf{a})
 \leq  \min_{A\mathbf{x}\leq \mathbf{a}}\mathbf{c}^T\mathbf{x} \leq (1+\epsilon) \xi_{opt}(\alpha;\mathbf{a}) \rp
\longrightarrow 1,
\end{eqnarray*}
where
\begin{equation*}
\xi_{opt}(\alpha;\mathbf{a})  \triangleq  \min_{x>0} \sqrt{x^2-   x^2 \lim_{n\rightarrow\infty}  \frac{\sum_{i=1}^{m}   \left ( \frac{1}{2} \left (\left ( \frac{\mathbf{a}_i}{x}\right )^2 + 1\right ) \mbox{erfc}\left( \frac{\mathbf{a}_i}{x\sqrt{2}}\right ) - \frac{\mathbf{a}_i}{x} \frac{e^{-\frac{\mathbf{a}_i^2}{2x^2}}}{\sqrt{2\pi}} \right )
   }{n} }.
\end{equation*}
For example, for $\a=\mathbf{1}$, one uncovers
\begin{equation*}
 \xi_{opt}(\alpha)
 =
  \min_{x>0} \sqrt{x^2-   x^2 \alpha \left ( \frac{1}{2} \left ( \frac{1}{x^2} + 1\right ) \erfc \left ( \frac{1}{x\sqrt{2}}\right ) - \frac{1}{x} \frac{e^{-\frac{1}{2x^2}}}{\sqrt{2\pi}} \right ) }.
\end{equation*}
Moreover, $2 \xi_{opt}(\alpha;\mathbf{1})$ is precisely the concentrating point of the mean width of the polyhedron $\{\x|A\x\leq \mathbf{1}\}$.

\vspace*{0.25in} \noindent {\bf Index Terms: Random linear programs; Polyhedron/polytope width; Random duality theory}.

\end{singlespace}

\section{Introduction}
\label{sec:back}

Given a function $f(\x):\mR^n\rightarrow\mR$ a generic linearly constrained optimization problems have the following form:
\begin{eqnarray}
\min_{\x\in\mR^{n}} & & f(\x)\nonumber \\
\mbox{subject to} & & A\x\leq \a \nonumber \\
& & B\x=\b, \label{eq:lincons}
\end{eqnarray}
where $A\in\mR^{m_1\times n}$ is an $m_1\times n$ matrix, $B\in\mR^{m_2\times n}$ is an $m_2\times n$ matrix, $\a\in\mR^{m_1\times 1}$ an $m_1\times 1$ vector, and $\b\in\mR^{m_2\times 1}$ is an $m_2\times 1$ vector. If, additionally, one also has that the objective is linear, i.e. for some $\c\in\mR^{n}$,
\begin{eqnarray}
 f(\x) =\c^T\x,\label{eq:linobj}
\end{eqnarray}
then the problem in (\ref{eq:lincons}) is called \emph{linear} program. Given that (\ref{eq:lincons}) is one of the most famous  classical optimization problems, it has a rich history and has been thoroughly studied for over a century (see, e.g., \cite{BV,Rock70,Luen84,Kuhn76,KuhnTuck51,BenNem01,BerTsi97,Bertsek99} and references therein). Depending on the context, types of applications, and overall usefulness many aspects of (\ref{eq:lincons}) are usually of interest for studying. For example, from an algorithmic point of view, most of the standard  optimization treatments (e.g., \cite{BV,Rock70,Luen84,Kuhn76,KuhnTuck51,BenNem01,BerTsi97,Bertsek99}), position just finding the minimal value of $f(\x)$ and $\x$'s that achieve it as the primary goal. Doing so computationally efficiently is usually assumed as a critically important complementary part of the primary goal package as well.

Our interest here is slightly different and is not primarily motivated by the classical algorithmic point of view. Instead of trying to algorithmically find the minimal $f(\x)$, we will be interested in scenarios where $f(\x)$ can be analytically characterized without actually explicitly solving (\ref{eq:lincons}). For that to be possible we switch discussion to random mediums and consider \emph{random linear programs} (rlps). We also restrict the focus to fully polyhedral/polytopal rlps, i.e. to rlps with inequality constraints. This basically means that the following version of (\ref{eq:lincons}) will be the main subject below
\begin{eqnarray}
\xi \triangleq \frac{1}{\sqrt{n}}\min_{\x\in\mR^{n}} & & \c^T\x \nonumber \\
\mbox{subject to} & & A\x\leq \a, \label{eq:linconsa0}
\end{eqnarray}
where $A$ is random. It is not that difficult to see that for $\c$ generated uniformly randomly on the unit $n$-dimensional sphere, $\mE \xi$ is the mean width of the polytope/polyhedron $A\x\leq \a$. In scenarios where $A$ and/or $\a$ are random, we view $\mE \xi$ as the mean width of the random polytope. Random optimization problems, and in particular the linear ones, have played an important role in various scientific and engineering fields and, consequently, have been the subject of an intensive studying over the last several decades. We below briefly discuss some of the most relevant known results.

\subsection{Relevant prior work}
\label{sec:priorwork}

Proving the average case polynomial complexity of famous linear program algorithms is among the most successful and well known examples of the utilization of rlps. In particular, \cite{Borgw82,Smale83} (see also \cite{Borgw99}) showed that the simplex method on average has a polynomial complexity. In \cite{SpiTeng04}, the so-called smoothed simplex analysis was introduced and it was shown that even close to deterministic linear program instances can typically be solved in polynomial time (for subsequent and more recent smoothed approach related developments see also, e.g., \cite{HuiLeeZhang23,DadHui18}).

One can think of smoothed analysis as being inspired by the early works on the so-called \emph{sensitivity} analysis and \emph{stochastic} programming. The idea was to determine the effect uncertainties or small perturbations in the problem's infrastructure (caused, say, by numerical errors or an overall lack of access to some portions of the system) would have on the objective value and solutions (see, e.g., \cite{Beale55,Dantzig55,FerDantzig56}). Early objective distributional effects considerations with (not necessarily always small) perturbations in any of $B$, $\b$, and $\c$ were discussed in \cite{Tintner60,Prekopa66,Babbar55}. More recent developments, with the perturbations particularly related to $\b$, can be found in, e.g., \cite{LiuBunNWl23,Klattetal22}, whereas more on general stochastic programming concepts can be found in classical reference, e.g., \cite{KallMayer79,RuzShapiro03}. Also, although not directly related to the linear programs, we mention that quite a lot of sensitivity or stochastic work has been done within the \emph{nonlinear} contexts as well (see, for example, excellent references \cite{DupWets88,Dupacova87,Shapiro89,Shapiro93}).

A, so to say, subclass of general linear programs are the feasibility linear problems. They also have a very rich history and we here focus on some of the most relevant works within the randomized contexts. Probably among the first random feasibility problems (rfps) that gained a strong prominence are the machine learning pattern recognition instances and in particular the so-called perceptrons. Perceptrons are often viewed as the most fundamental units of any neural network concept. Consequently, studying their various properties has been an attractive research topic for the better part of the last seven decades. The starting point were the famous  spherical perceptron capacity characterizations obtained in the early sixties of the last century \cite{Cover65,Wendel62} (the spherical perceptrons are obtained for $\c=\a=0$ in (\ref{eq:linconsa0})). They famously  established that the perceptron's  capacity (maximum allowable $m_1$  in (\ref{eq:linconsa0}) when $\c=\a=0$) basically \emph{doubles the dimension} of the data ambient space, $n$. After being initially proven as a remarkable combinatorial geometry fact in \cite{Cover65,Winder61,Wendel62}, decades later, it was reproved in various forms in a host of different fields ranging from machine learning and pattern recognition to probability and information theory (see, e.g., \cite{BalVen87,StojnicISIT2010binary,DTbern,Gar88,StojnicGardGen13}). Many more advanced results naturally followed. Some of the most relevant ones related to single perceptrons can be found in e.g., \cite{SchTir03,Talbook11a,Talbook11b,StojnicGardGen13,Stojnicnegsphflrdt23} and to networks of perceptrons in e.g.,
\cite{EKTVZ92,BHS92,MitchDurb89,BalMalZech19,ZavPeh21,Stojnictcmspnncapdinfdiffactrdt23}. In addition to the works related to the continuous rfps, a strong effort has been put forth in studying the corresponding discrete ones as well. Examples include binary perceptrons \cite{Gar88,KraMez89,DingSun19,NakSun23,BoltNakSunXu22,Tal99a,Stojnicbinperflrdt23}, various other forms of discrete perceptrons \cite{StojnicDiscPercp13,GutSte90}, general integer rfpos \cite{ChanVem14} and so on.

There has been a lot of great work related to the algorithmic aspects of feasibility problems as well. We here particularly emphasized the so-called Kczmarz iterative algorithm for linear programs. After its main contours were proposed in  \cite{Kacz37}, it was further developed in \cite{Agmon54,MotzShoen54}. Its excellent properties turned out to be particularly beneficial within the compressed sensing context and it was again brought to attention, and in a way revitalized, over the last a couple of decades (see, e.g., \cite{NeedellTropp14,StrVer09,EldNeed11} as well as \cite{Morshedetal22,LiuGu21} for the most recent developments).

Finally, characterizing the objective of (\ref{eq:linconsa0}) in a fully randomized context (not the above discussed one with the small perturbation type of randomness) has been studied as well. High-dimensional geometry type of studying of the mean width of the polytope was considered  in, e.g., \cite{AGPro15,GiaHioTs16,Gluskin88}.  For a very similar, symmetric polytope scenario, the lower and upper bounds differing by a constant factor were established in \cite{LPRTJ05}. The exact values of the generic proportionality constants from \cite{LPRTJ05} were then precisely determined in the limiting scenario $\alpha\rightarrow\infty$ in \cite{BakOstTik23}.

\subsection{Our contribution}
\label{sec:contribution}

We set $m\triangleq m_1$ and consider large $n$ \emph{linear} regime, i.e. the regime where $\alpha=\lim_{n\rightarrow\infty} \frac{m_1}{n}=\lim_{n\rightarrow\infty} \frac{m}{n}$  remains constant as $n$ grows. We then consider (\ref{eq:linconsa0}) in a typical statistical scenario with elements of $A$ being iid standard normals. Such scenarios are precisely the ones considered in \cite{StojnicRegRndDlt10,StojnicCSetam09,StojnicICASSP10var} where the foundational principles of the \emph{random duality theory} (RDT) were introduced. We here rely on these principles, utilize the RDT, and create a generic framework for the analysis of (\ref{eq:linconsa0}). As a consequence, we obtain the precise closed form characterization of $\xi$ (the objective value of (\ref{eq:linconsa0})) as a function of any real number $\alpha\in(0,\infty)$. Moreover, we show that for $\alpha\rightarrow\infty$ our results precisely match those obtained in \cite{BakOstTik23}.

In sections that follow below, we will slowly introduce all the needed RDT ingredients. We, however, find it useful to note here before starting the technical presentation, that our exposition will in no way use or rely on any analytical/algorithmic concepts particularly related to (\ref{eq:lincons}) or (\ref{eq:linconsa0}). In other words, besides a few well known generic optimization concepts, no extensive or advanced knowledge of the optimization theory is required.

\section{The problem setup} 
\label{sec:randlincons}

As emphasized earlier, we focus on programs (\ref{eq:lincons}) that have a linear objective and linear inequalities. In other words, we consider
\begin{eqnarray}
\xi=\frac{1}{\sqrt{n}}\min_{\x} & & \c^T\x \nonumber \\
\mbox{subject to} & & A\x\leq \a. \label{eq:randlincons1}
\end{eqnarray}
To avoid overwhelming the presentation with a tone of tiny details and special cases and to ultimately make the writing neater and easier, we assume throughout the paper that the objective in (\ref{eq:randlincons1}) exists and is either deterministically bounded (i.e., bounded for any realization of $A$) or that it is bounded with a probability (over the randomness of $A$) going $1$ as $n\rightarrow \infty$. Along the same lines, to further facilitate and make less cumbersome the overall exposition, we also assume boundedness (deterministic or random) of all other key quantities appearing in the derivations below. Equipped with such assumptions, we proceed by transforming (\ref{eq:randlincons1})
\begin{eqnarray}
\xi & = & \frac{1}{\sqrt{n}}\min_{\x}\max_{\lambda\geq 0} \lp \c^T\x+\lambda^TA\x -\lambda^T\a \rp. \label{eq:randlincons2}
\end{eqnarray}
 The above holds generically, i.e., for any $A$, $\a$, and $\c$. It therefore automatically applies to the random instances as well.
Given that such instances are of our particular interest here, the following summarizes the probabilistic characterization of $\xi$ that we are looking for.
 \begin{eqnarray}
\mbox{\bl{\textbf{Ultimate goal:}}}
\qquad\qquad \mbox{Given:} \quad && \alpha  =    \lim_{n\rightarrow \infty} \frac{m}{n}\in(0,\infty)  \nonumber \\
\quad \mbox{find} \quad & & \xi_{opt}(\alpha;\a) \nonumber \\
\mbox{such that} \quad  && \forall \epsilon>0, \quad \lim_{n\rightarrow\infty}\mP_A\lp (1-\epsilon)\xi_{opt}(\alpha;\a) \leq \xi \leq (1+\epsilon)\xi_{opt}(\alpha;\a) \rp\longrightarrow 1.\nonumber \\
  \label{eq:ex4}
\end{eqnarray}
As expected, $\alpha$ and $\epsilon$ can be arbitrarily large or small but they do not change as $n\rightarrow\infty$. Throughout the paper we adopt a usual convention where the subscripts next to $\mP$ and $\mE$ denote the randomness with respect to which the statistical evaluation is taken. When clear from the contexts, the subscripts are left unspecified.

\section{Handling random linear programs via RDT}
\label{sec:randlinconsrdt}

We start by recalling on the main RDT principles and then continue by showing, step-by-step, how each of them applies to the problems of our interest here.

\vspace{-.0in}\begin{center}
 	\tcbset{beamer,lower separated=false, fonttitle=\bfseries, coltext=black ,
		interior style={top color=yellow!20!white, bottom color=yellow!60!white},title style={left color=black!80!purple!60!cyan, right color=yellow!80!white},
		width=(\linewidth-4pt)/4,before=,after=\hfill,fonttitle=\bfseries}
 \begin{tcolorbox}[beamer,title={\small Summary of the RDT's main principles} \cite{StojnicCSetam09,StojnicRegRndDlt10}, width=1\linewidth]
\vspace{-.15in}
{\small \begin{eqnarray*}
 \begin{array}{ll}
\hspace{-.19in} \mbox{1) \emph{Finding underlying optimization algebraic representation}}
 & \hspace{-.0in} \mbox{2) \emph{Determining the random dual}} \\
\hspace{-.19in} \mbox{3) \emph{Handling the random dual}} &
 \hspace{-.0in} \mbox{4) \emph{Double-checking strong random duality.}}
 \end{array}
  \end{eqnarray*}}
\vspace{-.2in}
 \end{tcolorbox}
\end{center}\vspace{-.0in}
To make the presentation neat, we formalize all the key results (including both simple to more complicated ones) as lemmas and theorems.

\vspace{.1in}

\noindent \underline{1) \textbf{\emph{Algebraic objective characterization:}}}  The following lemma summarizes the algebraic discussions from previous sections and ultimately provides a convenient optimization representation of the objective $\xi$. Given that the trivial scaling by $\|\c\|_2$ of the objective in (\ref{eq:linconsa0}) introduces no conceptual changes, we below, without loss of generality, assume $\|\c\|_2=\sqrt{n}$.

\begin{lemma}(Algebraic optimization representation) Let $A\in\mR^{m\times n}$ be any given (possibly random) matrix. For a given $\c\in\mR^n$, $\|\c\|_2=\sqrt{n}$ and a vector $\a\in\mR^m$ with the components being fixed real numbers that do not change as $n\rightarrow\infty$, let $\xi$ be as in (\ref{eq:linconsa0}) and set
\begin{eqnarray}\label{eq:ta11}
f_{rp}(A) & \triangleq & \frac{1}{\sqrt{n}}\min_{\x}\max_{\lambda\geq 0} \lp \c^T\x+\lambda^TA\x -\lambda^T\a \rp
 \hspace{.8in} (\bl{\textbf{random primal}})
\nonumber \\
\xi_{rp} & \triangleq & \lim_{n\rightarrow\infty } \mE_A f_{rp}(A).   \end{eqnarray}
Then
\begin{equation}\label{eq:ta11a0}
\xi=f_{rp}(A) \quad \mbox{and} \quad \lim_{n\rightarrow\infty} \mE_A\xi =\xi_{rp}.
\end{equation}
\label{lemma:lemma1}
\end{lemma}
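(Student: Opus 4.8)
The statement to prove is Lemma~\ref{lemma:lemma1}, which claims that the optimization problem in (\ref{eq:linconsa0}) (equivalently (\ref{eq:randlincons1})) coincides with its Lagrangian max-min reformulation $f_{rp}(A)$, and that the corresponding expectations agree in the limit.

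\textbf{Overall approach.} The plan is to establish two facts: (i) the deterministic identity $\xi = f_{rp}(A)$ valid for \emph{every} realization of $A$, and then (ii) the expectation statement $\lim_{n\to\infty}\mE_A\xi = \xi_{rp}$, which follows immediately from (i) by taking expectations and then the limit, using the boundedness assumptions stated in Section~\ref{sec:randlincons}. So the entire content reduces to part (i), a purely deterministic convex-duality fact.

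\textbf{Key steps for part (i).} First I would recall that (\ref{eq:linconsa0}) is $\xi = \frac{1}{\sqrt{n}}\min_{\x:\,A\x\le\a}\c^T\x$. For the inner maximization in $f_{rp}(A)$, observe that for fixed $\x$,
\[
\max_{\lambda\ge 0}\ \lambda^T(A\x-\a)
=
\begin{cases}
0, & A\x-\a\le \0,\\
+\infty, & \text{otherwise},
\end{cases}
\]
since each coordinate contributes $\max_{\lambda_i\ge 0}\lambda_i(A\x-\a)_i$, which is $0$ when $(A\x-\a)_i\le 0$ and unbounded above when $(A\x-\a)_i>0$. Hence $\max_{\lambda\ge 0}\big(\c^T\x+\lambda^T(A\x-\a)\big)$ equals $\c^T\x$ on the feasible set $\{A\x\le\a\}$ and $+\infty$ off it. Taking $\min_\x$ of this then returns exactly $\min_{A\x\le\a}\c^T\x$, so $f_{rp}(A)=\xi$. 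This is an exact (not approximate) identity and needs no regularity hypotheses beyond those already assumed to guarantee the optimum exists and is finite (deterministically or with high probability); on the event where $\xi$ is finite the manipulation is valid verbatim, and the paper's standing boundedness assumption handles the rest. Note this is just the introduction of the indicator/penalty form of the constraint, not strong duality — there is no swap of $\min$ and $\max$ here, so no constraint qualification is needed.

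\textbf{Key steps for part (ii).} Since $\xi=f_{rp}(A)$ pointwise in $A$, we have $\mE_A\xi=\mE_A f_{rp}(A)$ for each $n$, and therefore $\lim_{n\to\infty}\mE_A\xi=\lim_{n\to\infty}\mE_A f_{rp}(A)=\xi_{rp}$ by the definition of $\xi_{rp}$ in (\ref{eq:ta11}). The only subtlety is that $\mE_A\xi$ must be well-defined and finite, which is exactly what the boundedness assumptions in Section~\ref{sec:randlincons} are there to ensure (either $\xi$ is deterministically bounded, or it is bounded with probability $\to 1$ and the contribution of the vanishing-probability event is controlled).

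\textbf{Main obstacle.} There is essentially no hard step here: the lemma is a bookkeeping reformulation. The only point demanding a little care is the edge case where the feasible region $\{A\x\le\a\}$ is empty or the linear objective is unbounded below on it — then $\xi=\pm\infty$. The paper sidesteps this by assuming throughout that the objective exists and is bounded (deterministically, or with probability tending to $1$), so I would simply invoke that assumption and restrict attention to the event where everything is finite; on that event the displayed identity is immediate.
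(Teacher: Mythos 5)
Your proposal is correct and follows exactly the paper's route: the paper's proof is the one-line invocation of the Lagrangian identity in (\ref{eq:randlincons2}), which is precisely the indicator/penalty argument you spell out (the inner $\max_{\lambda\ge 0}$ enforcing feasibility), followed by the trivial passage to expectations. You have simply made explicit the details the paper leaves implicit.
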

\begin{proof}
 Follows immediately via the Lagrangian from (\ref{eq:randlincons2}).
\end{proof}

The above lemma holds for any given matrix $A$. The RDT proceeds by imposing a statistics  on $A$.


\vspace{.1in}
\noindent \underline{2) \textbf{\emph{Determining the random dual:}}} As is standard within the RDT, we utilize the concentration of measure, which here basically means that for any fixed $\epsilon >0$,  one can write (see, e.g. \cite{StojnicCSetam09,StojnicRegRndDlt10,StojnicICASSP10var})
\begin{equation}
\lim_{n\rightarrow\infty}\mP_A\left (\frac{|f_{rp}(A)-\mE_A(f_{rp}(A))|}{\mE_A(f_{rp}(A))}>\epsilon\right )\longrightarrow 0.\label{eq:ta15}
\end{equation}
Another key RDT ingredient is the following so-called random dual theorem.
\begin{theorem}(Objective characterization via random dual) Assume the setup of Lemma \ref{lemma:lemma1} and let the components of $A$ be iid standard normals. Let $\g\in\mR^m$ and $\h\in\mR^n$ be vectors comprised of iid standard normals and set
\vspace{-.0in}
\begin{eqnarray}
  f_{rd}(\g,\h) & \triangleq &
  \frac{1}{\sqrt{n}}\min_{\x}\max_{\lambda\geq 0} \lp \c^T\x+\|\x\|_2\lambda^T\g+\|\lambda\|_2\h^T\x -\lambda^T\a \rp
   \hspace{.8in} (\bl{\textbf{random dual}})
  \nonumber \\
 \xi_{rd} & \triangleq & \lim_{n\rightarrow\infty} \mE_{\g,\h} f_{rd}(\g,\h)  .\label{eq:ta16}
\vspace{-.0in}\end{eqnarray}
One then has \vspace{-.0in}
\begin{eqnarray}
  \xi_{rd} & \triangleq & \lim_{n\rightarrow\infty} \mE_{\g,\h} f_{rd}(\g,\h)
  \leq
  \lim_{n\rightarrow\infty} \mE_{A} f_{rp}(A)  \triangleq  \xi_{rp}. \label{eq:ta16a0}
\vspace{-.0in}\end{eqnarray}
and
\begin{eqnarray}
 \lim_{n\rightarrow\infty}\mP_{\g,\h} \lp f_{rd}(\g,\h)\geq (1-\epsilon)\xi_{rd}\rp
 \leq  \lim_{n\rightarrow\infty}\mP_{A} \lp f_{rp}(A)\geq (1-\epsilon)\xi_{rd}\rp.\label{eq:ta17}
\end{eqnarray}
\label{thm:thm1}
\end{theorem}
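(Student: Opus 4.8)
The plan is to recognize this as an instance of the standard random-dual (Gordon-type) comparison inequality that underlies RDT. First I would start from the random primal
\[
f_{rp}(A) = \frac{1}{\sqrt{n}}\min_{\x}\max_{\lambda\geq 0} \lp \c^T\x+\lambda^TA\x -\lambda^T\a \rp,
\]
and observe that, after the standard RDT localization (restricting $\x$ to a sphere of radius $r=\|\x\|_2$ and $\lambda$ to a sphere of radius $\rho=\|\lambda\|_2$, then taking the outer min over $r$ and max over $\rho$), the bilinear form $\lambda^TA\x$ is the only place $A$ enters. On each fixed pair of spheres this is exactly the setting of the Gordon comparison theorem (equivalently Slepian's inequality for the two Gaussian processes $X_{\x,\lambda}=\lambda^TA\x + \|\lambda\|_2\|\x\|_2 g_0$ and $Y_{\x,\lambda}=\|\x\|_2\lambda^T\g+\|\lambda\|_2\h^T\x$, with the auxiliary $g_0$ absorbed in the usual way). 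The linear-in-$\x$, linear-in-$\lambda$ terms $\c^T\x-\lambda^T\a$ are deterministic and may be carried along unchanged inside both the primal and the dual optimization. This gives the expectation comparison
\[
\mE_{\g,\h} f_{rd}(\g,\h) \leq \mE_A f_{rp}(A),
\]
and passing to the limit $n\to\infty$ yields \eqref{eq:ta16a0}.

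Next I would upgrade the expectation inequality to the probability statement \eqref{eq:ta17}. The idea is to combine the Gordon comparison at the level of tail probabilities — the sharp (non-expectation) form of Gordon's theorem states that $\mP(f_{rp}(A)\geq t)\geq \mP(f_{rd}(\g,\h)\geq t)$ for every $t$ — with the concentration of measure bound \eqref{eq:ta15} applied to $f_{rd}$ around its mean $\xi_{rd}$. Concretely: with probability $\to 1$, $f_{rd}(\g,\h)\geq(1-\epsilon)\xi_{rd}$ by concentration; feeding $t=(1-\epsilon)\xi_{rd}$ into the Gordon tail inequality transfers this lower-tail event to $f_{rp}(A)$, which is precisely \eqref{eq:ta17}. (One also uses that $\xi_{rd}>0$, which is covered by the paper's blanket boundedness/positivity assumptions.)

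The main obstacle — really the one delicate point — is justifying the reduction to fixed-radius spheres and the application of Gordon's theorem through the min–max rather than a plain min or plain max. Gordon's inequality applies cleanly to $\min_u\max_v$ of the relevant Gaussian processes, so one must check that the order of operations in $f_{rp}$ genuinely matches that template after peeling off the norms $\|\x\|_2$ and $\|\lambda\|_2$; the interchange of the outer scalar optimizations over $r$ and $\rho$ with the $\min_{\x}\max_{\lambda}$ on the spheres, and the handling of the unbounded Lagrange multiplier $\lambda\geq 0$ (where the boundedness assumption stated in Section \ref{sec:randlincons} is invoked to truncate $\|\lambda\|_2$ to a compact range), are the steps that require care. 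All of this, however, is exactly the machinery of \cite{StojnicRegRndDlt10,StojnicCSetam09,StojnicICASSP10var}, so at the level of this proof I would cite those references for the comparison step and present the argument above as the verification that the present min–max fits their framework.
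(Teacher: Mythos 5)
Your proposal is correct and follows essentially the same route as the paper, which proves Theorem \ref{thm:thm1} simply by citing Gordon's probabilistic comparison theorem (Theorem B of \cite{Gordon88} and its min--max formulation in \cite{Stojnicgscomp16}); your write-up merely makes explicit the standard RDT details (sphere localization, the auxiliary scalar Gaussian, the deterministic terms $\c^T\x-\lambda^T\a$, and the tail-probability form yielding (\ref{eq:ta17})) that the paper leaves to the cited references.
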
\vspace{-.17in}
\begin{proof}
  Follows as a direct application of the Gordon's probabilistic comparison theorem (see, e.g., Theorem B in \cite{Gordon88} as well as Theorem 1, Corollary 1, and Section 2.7.2 in \cite{Stojnicgscomp16}).
\end{proof}
%
%
%
%
%
\vspace{.1in}
\noindent \underline{3) \textbf{\emph{Handling the random dual:}}} After solving the inner maximization over $\lambda$  we have
\begin{eqnarray}
  f_{rd}(\g,\h)
  & \triangleq &
  \frac{1}{\sqrt{n}}\min_{\x}\max_{\lambda\geq 0} \lp \c^T\x+\|\x\|_2\lambda^T\g+\|\lambda\|_2\h^T\x -\lambda^T\a \rp\nonumber \\
  & = &
  \frac{1}{\sqrt{n}}\min_{\x}\max_{\|\lambda\|_2} \lp \c^T\x+\|\lambda\|_2\h^T\x  + \|\lambda\|_2 \| \max \lp \|\x\|_2\g-\a,0\rp \|_2 \rp.\label{eq:ta18a0}
\end{eqnarray}
Keeping the $\|\lambda\|_2$ fixed and minimizing over $\x$, we further find
\begin{eqnarray}
  f_{rd}(\g,\h)
   & = &
  \frac{1}{\sqrt{n}}\min_{\x}\max_{\|\lambda\|_2} \lp \c^T\x+\|\lambda\|_2\h^T\x  + \|\lambda\|_2 \| \max \lp \|\x\|_2\g-\a,0\rp \|_2 \rp
  \nonumber \\
  & = &
  \frac{1}{\sqrt{n}}\min_{\|\x\|_2}\max_{\|\lambda\|_2} \lp  -\|\x\|_2\|\c+\|\lambda\|_2\h\|_2  + \|\lambda\|_2 \| \max \lp \|\x\|_2\g-\a,0\rp \|_2 \rp.\label{eq:ta18a1}
\end{eqnarray}
Setting $x=\|\x\|_2$ and $\lambda_f=\|\lambda\|_2$ and keeping $x$ and $\lambda_f$ fixed,  we then first find
\begin{eqnarray}
  \lim_{n\rightarrow\infty} \mE_{\h} \frac{1}{\sqrt{n}} \|\c+\|\lambda\|_2\h\|_2
&  = &\lim_{n\rightarrow\infty} \sqrt{ \frac{\mE_{\h}\|\c+\|\lambda\|_2\h\|_2^2
}{n} } \nonumber \\
&  = &\lim_{n\rightarrow\infty} \sqrt{ \frac{\mE_{\h}\lp \|\c\|_2^2+2\|\lambda\|_2\c^T\h+\|\lambda\|_2^2\|\h\|_2^2\rp
}{n} } \nonumber \\
&  = &\lim_{n\rightarrow\infty} \sqrt{ \frac{\|\c\|_2^2+2\|\lambda\|_2\mE_{\h}\lp\c^T\h\rp+\|\lambda\|_2^2\mE_{\h}\|\h\|_2^2
}{n} } \nonumber \\
& =& \sqrt{1+\lambda_f^2}.\label{eq:ta18a2}
\end{eqnarray}
Moreover, we also have
\begin{eqnarray}
f^{(1)}(x;\a) & \triangleq  &
\lim_{n\rightarrow\infty} \mE_{\g} \frac{1}{\sqrt{n}} \| \max \lp \|\x\|_2\g-\a,0\rp \|_2 \nonumber \\
& = &
\lim_{n\rightarrow\infty} \mE_{\g} \frac{1}{\sqrt{n}} \| \max \lp x\g-\a,0\rp \|_2\nonumber \\
& = &
x\lim_{n\rightarrow\infty} \mE_{\g} \frac{1}{\sqrt{n}} \left \| \max \lp \g-\frac{\a}{x},0\rp \right \|_2\nonumber \\
& = &
x\lim_{n\rightarrow\infty} \sqrt{\frac{\mE_{\g} \left \| \max \lp \g-\frac{\a}{x},0\rp \right \|_2^2}{n}} \nonumber \\
& = &
x\lim_{n\rightarrow\infty} \sqrt{\frac{\sum_{i=1}^{n}\mE_{\g_i} \max \lp \g_i-\frac{\a_i}{x},0\rp^2}{n}} \nonumber \\
& = &
x\lim_{n\rightarrow\infty} \sqrt{\frac{\sum_{i=1}^{n}  \frac{1}{\sqrt{2\pi}}\int_{\frac{\a_i}{x}}^{\infty} \lp \g_i-\frac{\a_i}{x},0\rp^2e^{-\frac{\g_i^2}{2}}d\g_i}{n}} \nonumber \\& = &
x\lim_{n\rightarrow\infty} \sqrt{\frac{\sum_{i=1}^{n} f^{(1)}_i(x;\a_i)}{n}},\label{eq:ta18a3}
\end{eqnarray}
where
\begin{eqnarray}
f^{(1)}_i(x;\a_i)
& \triangleq &
 \frac{1}{\sqrt{2\pi}}\int_{\frac{\a_i}{x}}^{\infty} \lp \g_i-\frac{\a_i}{x},0\rp^2e^{-\frac{\g_i^2}{2}}d\g_i =
 \frac{1}{2} \lp\lp\frac{\a_i}{x}\rp^2 + 1\rp \erfc\lp\frac{\a_i}{x\sqrt{2}}\rp - \frac{\a_i}{x} \frac{e^{-\frac{\a_i^2}{2x^2}}}{\sqrt{2\pi}}.\label{eq:ta18a4}
\end{eqnarray}
A combination of (\ref{eq:ta18a1})-(\ref{eq:ta18a3}) together with measure concentrations gives
\begin{eqnarray}
 \mE_{\g,\h} f_{rd}(\g,\h)
   & = &
 \mE_{\g,\h}  \frac{1}{\sqrt{n}}\min_{\|\x\|_2}\max_{\|\lambda\|_2} \lp  -\|\x\|_2\|\c+\|\lambda\|_2\h\|_2  + \|\lambda\|_2 \| \max \lp \|\x\|_2\g-\a,0\rp \|_2 \rp \nonumber \\
   & = &
 \min_{x>0}\max_{\lambda_f>0} \lp  -x\sqrt{1+\lambda_f^2}  + \lambda_f  f^{(1)}(x;\a)\rp,
 \label{eq:ta18a5}
\end{eqnarray}
where
\begin{eqnarray}
 f^{(1)}(x;\a)  \triangleq  x\lim_{n\rightarrow\infty}\sqrt{\frac{\sum_{i=1}^{m}f^{(1)}_i(x,\a_i)}{n}}
 =
x \lim_{n\rightarrow\infty}\sqrt{\frac{\sum_{i=1}^{m}  \lp  \frac{1}{2} \lp\lp\frac{\a_i}{x}\rp^2 + 1\rp \erfc\lp\frac{\a_i}{x\sqrt{2}}\rp - \frac{\a_i}{x} \frac{e^{-\frac{\a_i^2}{2x^2}}}{\sqrt{2\pi}}\rp
   }{n}}.
 \label{eq:ta18a6}
\end{eqnarray}
To optimize over $\lambda_f$, we first determine the following derivative
\begin{eqnarray}
 \frac{d\lp  -x\sqrt{1+\lambda_f^2}  + \lambda_f  f^{(1)}(x;\a)\rp}{d\lambda_f}
 & = & -\frac{2x\lambda_f}{2\sqrt{1+\lambda_f^2}} +  f^{(1)}(x;\a).
 \label{eq:ta18a7}
\end{eqnarray}
Setting the above derivative to zero then gives
\begin{eqnarray}
  x^2\lambda_f^2= \lp 1+\lambda_f^2\rp \lp f^{(1)}(x;\a)\rp^2,
 \label{eq:ta18a8}
\end{eqnarray}
which implies that one has for the optimal $\lambda_f$, $\hat{\lambda}_f$,
\begin{eqnarray}
  \hat{\lambda}_f = \frac{f^{(1)}(x;\a)}{\sqrt{x^2-\lp f^{(1)}(x;\a)\rp^2}},
 \label{eq:ta18a9}
\end{eqnarray}
Plugging the above $\hat{\lambda}_f$ back in (\ref{eq:ta18a5}), we obtain
\begin{eqnarray}
 \mE_{\g,\h} f_{rd}(\g,\h)
    & = &
 \min_{x>0}\max_{\lambda_f>0} \lp  -x\sqrt{1+\lambda_f^2}  + \lambda_f  f^{(1)}(x;\a)\rp \nonumber \\
    & = &
 \min_{x>0}  \lp  -x\sqrt{1+\lp  \frac{f^{(1)}(x;\a)}{\sqrt{x^2-\lp f^{(1)}(x;\a)\rp^2}}     \rp^2}  + \frac{\lp f^{(1)}(x;\a)\rp^2}{\sqrt{x^2-\lp f^{(1)}(x;\a)\rp^2}}\rp \nonumber \\
     & = &
 \min_{x>0} \lp  -x\sqrt{ \frac{x^2}{x^2-\lp f^{(1)}(x;\a)\rp^2}}  + \frac{\lp f^{(1)}(x;\a)\rp^2}{\sqrt{x^2-\lp f^{(1)}(x;\a)\rp^2}}\rp \nonumber \\
      & = &
 \min_{x>0} \sqrt{x^2-\lp f^{(1)}(x;\a)\rp^2}
 \label{eq:ta18a10}
\end{eqnarray}

We summarize the above discussion in the following lemma.
\begin{lemma}(Characterization of random dual) Assume the setup of Theorem \ref{thm:thm1} and let $\xi_{rd}$ be as in (\ref{eq:ta16}). One then has \vspace{-.0in}
\begin{eqnarray}
  \xi_{rd} & \triangleq & \lim_{n\rightarrow\infty} \mE_{\g,\h} f_{rd}(\g,\h)
=
 \min_{x>0} \sqrt{x^2-\lp f^{(1)}(x;\a)\rp^2} \nonumber \\
 & = &
 \min_{x>0} \sqrt{x^2-    x^2\lim_{n\rightarrow\infty}  \frac{\sum_{i=1}^{m}   \lp \frac{1}{2} \lp\lp\frac{\a_i}{x}\rp^2 + 1\rp \erfc\lp\frac{\a_i}{x\sqrt{2}}\rp - \frac{\a_i}{x} \frac{e^{-\frac{\a_i^2}{2x^2}}}{\sqrt{2\pi}} \rp
   }{n} } \label{eq:lemma2ta16a0}
\end{eqnarray}
and
\begin{eqnarray}
 \lim_{n\rightarrow\infty}\mP_{\g,\h} \lp  (1-\epsilon)\xi_{rd} \leq  f_{rd}(\g,\h)\leq (1+\epsilon)\xi_{rd}\rp
\longrightarrow 1.\label{eq:lemma2ta17}
\end{eqnarray}
\label{lemma:lemma2}
\end{lemma}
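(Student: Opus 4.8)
The assertion packages the computation already laid out in (\ref{eq:ta18a0})--(\ref{eq:ta18a10}), so the plan is to assemble those fragments into one clean argument. First I would reduce the two vector optimizations defining $f_{rd}(\g,\h)$ in (\ref{eq:ta16}) to scalar ones. Rewriting the bracketed objective as $\c^T\x+\|\lambda\|_2\h^T\x+\lambda^T\lp\|\x\|_2\g-\a\rp$, for a fixed magnitude $\|\lambda\|_2$ the maximizing nonnegative $\lambda$ aligns with the positive part of $\|\x\|_2\g-\a$ and yields $\|\lambda\|_2\,\|\max\lp\|\x\|_2\g-\a,\0\rp\|_2$; then for a fixed magnitude $\|\x\|_2=x$ the minimizing $\x$ points opposite to $\c+\|\lambda\|_2\h$ and yields $-x\|\c+\|\lambda\|_2\h\|_2$. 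This is exactly the passage (\ref{eq:ta18a0})--(\ref{eq:ta18a1}) and leaves a scalar $\min_{x>0}\max_{\lambda_f>0}$ problem in $x=\|\x\|_2$ and $\lambda_f=\|\lambda\|_2$.

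Next I would pass to the large-$n$ expectation. The two facts needed are the elementary Gaussian concentrations $\tfrac{1}{\sqrt n}\|\c+\lambda_f\h\|_2\to\sqrt{1+\lambda_f^2}$ — using $\|\c\|_2=\sqrt n$, $\tfrac1n\c^T\h\to0$, $\tfrac1n\|\h\|_2^2\to1$ as in (\ref{eq:ta18a2}) — and $\tfrac{1}{\sqrt n}\|\max\lp x\g-\a,\0\rp\|_2\to f^{(1)}(x;\a)$, which is the law of large numbers for $\tfrac1n\sum_{i}\max(\g_i-\a_i/x,0)^2$ combined with the explicit one-dimensional Gaussian integral (\ref{eq:ta18a4}). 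Feeding these into (\ref{eq:ta15}) and invoking the standing boundedness assumptions of Section~\ref{sec:randlincons} to move $\lim_n$ through $\mE_{\g,\h}$ and through $\min_x\max_{\lambda_f}$ gives $\lim_{n\rightarrow\infty}\mE_{\g,\h}f_{rd}(\g,\h)=\min_{x>0}\max_{\lambda_f>0}\lp-x\sqrt{1+\lambda_f^2}+\lambda_f f^{(1)}(x;\a)\rp$, i.e.\ (\ref{eq:ta18a5}).

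Then I would carry out the scalar inner maximization in closed form exactly as in (\ref{eq:ta18a7})--(\ref{eq:ta18a10}): setting the $\lambda_f$-derivative to zero gives $x^2\lambda_f^2=(1+\lambda_f^2)(f^{(1)}(x;\a))^2$, hence $\hat{\lambda}_f=f^{(1)}(x;\a)/\sqrt{x^2-(f^{(1)}(x;\a))^2}$, which is the genuine maximizer precisely on the range $f^{(1)}(x;\a)<x$ (for $f^{(1)}(x;\a)\geq x$ the inner supremum is $+\infty$, so such $x$ never realize the outer minimum, consistent with the boundedness hypothesis on $\xi$). Substituting $\hat{\lambda}_f$ back simplifies the two terms to $\sqrt{x^2-(f^{(1)}(x;\a))^2}$, and unfolding $f^{(1)}$ via (\ref{eq:ta18a4}) produces the $\erfc$ form in (\ref{eq:lemma2ta16a0}). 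For the concentration (\ref{eq:lemma2ta17}), I would observe that, after the reductions above and with $x,\lambda_f$ confined to a bounded range, $f_{rd}(\g,\h)$ is a Lipschitz function of the Gaussian vector $(\g,\h)$ with constant $O(1/\sqrt n)$; standard Gaussian concentration then yields $|f_{rd}(\g,\h)-\mE_{\g,\h}f_{rd}(\g,\h)|\to0$ in probability, and combined with the convergence of the mean to $\xi_{rd}$ just established, this gives the two-sided bound.

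The step I expect to be the real obstacle is the rigorous interchange of $\lim_{n\rightarrow\infty}$, $\mE_{\g,\h}$ and the inner $\min_\x\max_\lambda$: one must argue that the pointwise-in-$(x,\lambda_f)$ limits of the two Gaussian functionals propagate through the optimization. This is precisely where the global boundedness assumptions of Section~\ref{sec:randlincons} are used — with the feasible $x$ and $\lambda_f$ lying in a compact set, a uniform-Lipschitz/covering argument upgrades pointwise convergence to uniform convergence and lets the optimizers pass to the limit. The remaining ingredients — the Gaussian integral (\ref{eq:ta18a4}) and the algebra of (\ref{eq:ta18a7})--(\ref{eq:ta18a10}) — are routine.
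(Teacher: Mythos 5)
Your proposal is correct and follows essentially the same route as the paper: the paper's own proof of this lemma is precisely the chain (\ref{eq:ta18a0})--(\ref{eq:ta18a10}) (directional optimization of $\lambda$ and $\x$, scalar reduction to $x,\lambda_f$, the two Gaussian limits, and the closed-form inner maximization), finished off by an appeal to "trivial concentration" of $f_{rd}(\g,\h)$. Your additional remarks — the caveat that $\hat{\lambda}_f$ is only the maximizer when $f^{(1)}(x;\a)<x$, and the explicit Lipschitz-concentration and uniform-convergence justification for interchanging the limit, expectation, and optimization — are sound refinements of steps the paper leaves implicit, not a different argument.
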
\vspace{-.17in}
\begin{proof}
Follows from the above discussion and trivial concentration of $f_{rd}(\g,\h)$.
\end{proof}

The above discussion allows us then to also formulate the following theorem.

\begin{theorem}(Optimal objective) Let $A\in\mR^{m\times n}$ be a random matrix with iid standard normal elements. Consider a large $n$ linear regime with $\alpha=\lim_{n\rightarrow\infty}\frac{m}{n}$ that does not change as $n$ grows. For a given $\c\in\mR^n$, $\|\c\|_2=\sqrt{n}$ and a vector $\a\in\mR^m$ with the components being fixed real numbers that do not change as $n\rightarrow\infty$, let $\xi$, $\xi_{rp}$, and $\xi_{rd}$ as in (\ref{eq:linconsa0}),
(\ref{eq:ta11}), and (\ref{eq:ta16}),  respectively. Then
\begin{equation}\label{eq:thm2a11a0}
 \lim_{n\rightarrow\infty} \mE_A\xi =\xi_{rp}=\xi_{rd}
 =
  \min_{x>0} \sqrt{x^2-   x^2 \lim_{n\rightarrow\infty}  \frac{\sum_{i=1}^{m}   \lp \frac{1}{2} \lp\lp\frac{\a_i}{x}\rp^2 + 1\rp \erfc\lp\frac{\a_i}{x\sqrt{2}}\rp - \frac{\a_i}{x} \frac{e^{-\frac{\a_i^2}{2x^2}}}{\sqrt{2\pi}} \rp
   }{n} } \triangleq \xi_{opt}(\alpha;\a).
\end{equation}
Moreover,
\begin{eqnarray}
 \lim_{n\rightarrow\infty}\mP_{A} \lp  (1-\epsilon)\mE_A\xi \leq  \xi \leq (1+\epsilon)\mE_A\xi\rp
\longrightarrow 1.\label{eq:thm2ta17}
\end{eqnarray}
In particular, for $\a=\1$ (where $\1$ is a column vector of all ones), we have
\begin{equation}\label{eq:thm2a11a1}
 \lim_{n\rightarrow\infty} \mE_A\xi =\xi_{rp}=\xi_{rd}
 =
  \min_{x>0} \sqrt{x^2-   x^2 \alpha \lp \frac{1}{2} \lp \frac{1}{x^2} + 1\rp \erfc\lp\frac{1}{x\sqrt{2}}\rp - \frac{1}{x} \frac{e^{-\frac{1}{2x^2}}}{\sqrt{2\pi}} \rp} \triangleq \xi_{opt}(\alpha).
\end{equation}
Moreover, for random $\a$ with identically distributed components $\a_i$, we have
\begin{equation}\label{eq:thm2a11a1}
 \lim_{n\rightarrow\infty} \mE_A\xi =\xi_{rp}=\xi_{rd}
 =
  \min_{x>0} \sqrt{x^2-   x^2 \alpha  \mE_{\a_i}\lp \frac{1}{2} \lp\lp\frac{\a_i}{x}\rp^2 + 1\rp \erfc\lp\frac{\a_i}{x\sqrt{2}}\rp - \frac{\a_i}{x} \frac{e^{-\frac{\a_i^2}{2x^2}}}{\sqrt{2\pi}} \rp}.
\end{equation}
\label{thm:thm2}
\end{theorem}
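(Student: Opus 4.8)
The plan is to assemble the statement from the three results already in place and to supply the single missing link — the reverse inequality $\xi_{rp}\leq\xi_{rd}$, which is exactly the fourth RDT principle (\emph{double-checking strong random duality}). Lemma \ref{lemma:lemma1} already gives $\xi=f_{rp}(A)$ and $\lim_{n\rightarrow\infty}\mE_A\xi=\xi_{rp}$. Theorem \ref{thm:thm1} gives the Gordon direction $\xi_{rd}\leq\xi_{rp}$ in (\ref{eq:ta16a0}) together with the one-sided probability transfer (\ref{eq:ta17}), and Lemma \ref{lemma:lemma2} evaluates $\xi_{rd}$ in the closed form displayed in (\ref{eq:lemma2ta16a0}) while also recording that $f_{rd}(\g,\h)$ concentrates around $\xi_{rd}$. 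Hence, once $\xi_{rp}=\xi_{rd}$ is established, the chain $\lim_{n\rightarrow\infty}\mE_A\xi=\xi_{rp}=\xi_{rd}=\xi_{opt}(\alpha;\a)$ follows at once, with the right-hand side being the explicit expression in (\ref{eq:thm2a11a0}).

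The core step is thus the matching upper bound $\xi_{rp}\leq\xi_{rd}$. Here I would exploit that the primal saddle representation (\ref{eq:randlincons2}) has the bracketed Lagrangian $\c^T\x+\lambda^TA\x-\lambda^T\a$ linear (hence convex) in $\x$ and linear (hence concave) in $\lambda\geq0$, and that, by the standing boundedness assumptions of Section \ref{sec:randlincons}, the optimization can be confined to compact sets without changing its value. This convex-concave, essentially-compact structure is precisely the situation in which the Gordon comparison underlying Theorem \ref{thm:thm1} can be run in the reverse direction as well; doing so produces, alongside (\ref{eq:ta17}), the complementary bound $\lim_{n\rightarrow\infty}\mP_A\lp f_{rp}(A)\leq(1+\epsilon)\xi_{rd}\rp\longrightarrow1$ for every $\epsilon>0$. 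Together with the concentration (\ref{eq:ta15}) this forces $\xi_{rp}=\lim_{n\rightarrow\infty}\mE_A f_{rp}(A)\leq\xi_{rd}$, and combined with (\ref{eq:ta16a0}) we get $\xi_{rp}=\xi_{rd}$; Lemma \ref{lemma:lemma2} then identifies the common value with $\xi_{opt}(\alpha;\a)$, which is (\ref{eq:thm2a11a0}). The two-sided concentration (\ref{eq:thm2ta17}) is then immediate: since $\xi=f_{rp}(A)$ and $\mE_A\xi=\mE_A f_{rp}(A)$, (\ref{eq:ta15}) yields $(1-\epsilon)\mE_A\xi\leq\xi\leq(1+\epsilon)\mE_A\xi$ with probability tending to one.

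Finally I would read off the two special cases. For $\a=\1$ every summand of $\sum_{i=1}^{m}f^{(1)}_i(x;\a_i)$ equals $f^{(1)}_i(x;1)$, so $\lim_{n\rightarrow\infty}\frac{1}{n}\sum_{i=1}^{m}f^{(1)}_i(x;\a_i)=\alpha\lp\frac{1}{2}\lp\frac{1}{x^2}+1\rp\erfc\lp\frac{1}{x\sqrt{2}}\rp-\frac{1}{x}\frac{e^{-\frac{1}{2x^2}}}{\sqrt{2\pi}}\rp$; substituting this into (\ref{eq:lemma2ta16a0}) gives (\ref{eq:thm2a11a1}). For random $\a$ with iid components, the law of large numbers gives $\frac{1}{m}\sum_{i=1}^{m}f^{(1)}_i(x;\a_i)\rightarrow\mE_{\a_i}f^{(1)}_i(x;\a_i)$ for each fixed $x>0$, hence $\frac{1}{n}\sum_{i=1}^{m}f^{(1)}_i(x;\a_i)\rightarrow\alpha\,\mE_{\a_i}f^{(1)}_i(x;\a_i)$, and since (under the boundedness assumptions) the minimization over $x$ stays in a fixed compact interval on which $f^{(1)}_i(\cdot;\a_i)$ is continuous and the convergence is uniform, one may pass the limit inside $\min_{x>0}$ to obtain the expectation form of $\xi_{opt}$.

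The step I expect to be the genuine obstacle is the strong-random-duality inequality $\xi_{rp}\leq\xi_{rd}$: one has to check carefully that the convex-concave structure of (\ref{eq:randlincons2}), together with the boundedness assumptions, really does license the reverse Gaussian comparison — in particular, ruling out pathologies coming from an unbounded feasible set or a dual direction escaping to infinity. A second, more routine, technical point is the interchange of $\lim_{n}$ with $\min_{x>0}$ in the random-$\a$ case, which requires the uniform law of large numbers / equicontinuity argument sketched above rather than mere pointwise convergence.
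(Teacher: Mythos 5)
Your proposal is correct and follows essentially the same route as the paper: Gordon's inequality (Theorem \ref{thm:thm1}) for $\xi_{rd}\leq\xi_{rp}$, the convexity-based reversal (strong random duality) for the matching upper bound, Lemma \ref{lemma:lemma2} for the closed form, and trivial particularization for $\a=\1$. The only cosmetic differences are that the paper derives the two-sided concentration (\ref{eq:thm2ta17}) via a probability chain through the random dual rather than directly from (\ref{eq:ta15}), and that your uniform-LLN justification of the random-$\a$ case is spelled out where the paper leaves it implicit.
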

\begin{proof}
 We first have that 
 \begin{equation}\label{eq:proofthm2eq1}
 \lim_{n\rightarrow\infty} \mE_A\xi =\xi_{rp}\geq \xi_{rd}
 =
  \min_{x>0} \sqrt{x^2-   x^2 \lim_{n\rightarrow\infty}  \frac{\sum_{i=1}^{m}   \lp \frac{1}{2} \lp\lp\frac{\a_i}{x}\rp^2 + 1\rp \erfc\lp\frac{\a_i}{x\sqrt{2}}\rp - \frac{\a_i}{x} \frac{e^{-\frac{\a_i^2}{2x^2}}}{\sqrt{2\pi}} \rp
   }{n} },
\end{equation}
holds due to Theorem \ref{thm:thm1} (in particular, due to the inequality in (\ref{eq:ta16a0})) and (\ref{eq:lemma2ta16a0}). The reversal inequality in (\ref{eq:proofthm2eq1}) holds due to the underlying convexity and the fact that strong random duality is in place and all the reversal arguments from \cite{StojnicRegRndDlt10,StojnicGorEx10} apply.

We further have that (\ref{eq:thm2ta17}) is rewritten (\ref{eq:lemma2ta16a0}) and holds due to the concentration of $\xi$. One can observe that
\begin{eqnarray}
 \lim_{n\rightarrow\infty}\mP_{\g,\h} \lp  (1-\epsilon)\xi_{rd} \leq  f_{rd}(\g,\h)\leq (1+\epsilon)\xi_{rd}\rp
&  \leq & 
 \lim_{n\rightarrow\infty}\mP_{\g,\h} \lp f_{rd}(\g,\h)\geq (1-\epsilon)\xi_{rd}\rp \nonumber \\
& \leq & \lim_{n\rightarrow\infty}\mP_{A} \lp f_{rp}(A)\geq (1-\epsilon)\xi_{rd}\rp  \nonumber \\
& \leq & \lim_{n\rightarrow\infty}\mP_{A} \lp \xi \geq (1-\epsilon) \mE_{A}\xi\rp,\label{eq:proofthm2eq2}
\end{eqnarray}
where the first inequality follows by the probability axioms, the second by (\ref{eq:ta17}), and the third by a combination of (\ref{eq:ta11a0})
and (\ref{eq:thm2a11a0}). A combination of (\ref{eq:lemma2ta17}) and (\ref{eq:proofthm2eq2}) then gives
\begin{eqnarray}
\lim_{n\rightarrow\infty}\mP_{A} \lp \xi \geq (1-\epsilon) \mE_{A}\xi\rp \longrightarrow 1.\label{eq:proofthm2eq3}
\end{eqnarray}
The reversal arguments of \cite{StojnicRegRndDlt10,StojnicGorEx10} then ensure that 
\begin{eqnarray}
\lim_{n\rightarrow\infty}\mP_{A} \lp \xi \leq (1+\epsilon) \mE_{A}\xi\rp \longrightarrow 1.\label{eq:proofthm2eq4}
\end{eqnarray}
A combination of (\ref{eq:proofthm2eq3}) and (\ref{eq:proofthm2eq4}) confirms the concentration (\ref{eq:thm2ta17}). Keeping in mind that particularization $\a_i=1$ trivially specializes (\ref{eq:thm2a11a0}) to (\ref{eq:thm2a11a1}) completes the proof.
\end{proof}

\vspace{.1in}
\noindent \underline{ 4) \textbf{\emph{Double checking strong random duality:}}} As we have mentioned above, due to the underlying convexity, the reversal arguments of \cite{StojnicGorEx10,StojnicRegRndDlt10} and the strong random duality are in place as well, and the results proven in the above theorem are not only excellent bounds but also the exact characterizations.

\subsection{Numerical results}
\label{secnumres}

The results obtained based on the above theorem are shown in Table \ref{tab:tab1}. We in parallel show the results obtained based on numerical simulations in \cite{BakOstTik23}. Even though our proofs require $n\rightarrow\infty$, we, already for a fairly small $n=50$, observe a rather remarkable agreement (with relative difference $\sim 0.1\%$) between the theory and simulations.
\begin{table}[h]
\caption{Gaussian polyhedron $\a=\1$; Optimal objective $\xi_{opt}(\alpha)=\mE_A\xi$ --- \textbf{\bl{theory}} versus \textbf{simulations}}
\vspace{.1in}
\centering
\def\arraystretch{1.2}
\begin{tabular}{||c||c|c||c||c||}\hline\hline
 \hspace{-0in} dimensions ratio                                               & \# of constraints & \# of unknowns & \bl{$\mE_A\xi$}        & $\mE_A\xi$    \\
 \hspace{-0in}$\alpha=\frac{m}{n}$                                               & $m$ & $n$ & $\textbf{\bl{theory (this paper)}}$        & $\textbf{simulations \cite{BakOstTik23}}$    \\ \hline\hline
$20$                                        & $  1000 $  & $   50  $  & $ \bl{\mathbf{0.50402}} $  & $ \mathbf{0.50626} $  \\ \hline \hline
$40$                                        & $  2000 $  & $   50  $  &  $ \bl{\mathbf{0.43907}} $ & $  \mathbf{0.44119} $ \\  \hline
 $120$                                      & $ 6000  $ & $  50 $ &  $ \bl{\mathbf{0.37264}} $     & $  \mathbf{0.37256} $   \\ \hline\hline
 $200$                                      & $ 10000  $ & $  50 $ & $  \bl{\mathbf{0.35032}} $       & $ \mathbf{0.35176} $   \\ \hline\hline
 $400$                                      & $ 20000  $ & $  50 $ & $  \bl{\mathbf{0.32545}} $    & $  \mathbf{0.32473} $   \\ \hline\hline
  \end{tabular}
\label{tab:tab1}
\end{table}

The obtained results are also visualized in Figures \ref{fig:fig1}. Also from Theorem \ref{thm:thm2}, we have
\begin{equation}\label{eq:alpjainf}
\lim_{\alpha\rightarrow\infty}\frac{\xi_{opt}(\alpha)}{\sqrt{2\log\lp\alpha\rp}}=1,  
\end{equation}
which matches what was obtained in \cite{BakOstTik23}. In Figure \ref{fig:fig1}, the limiting curve, $\frac{1}{\sqrt{2\log\lp\alpha\rp}}$, is displayed as well.

\begin{figure}[h]
\centering
\centerline{\includegraphics[width=1\linewidth]{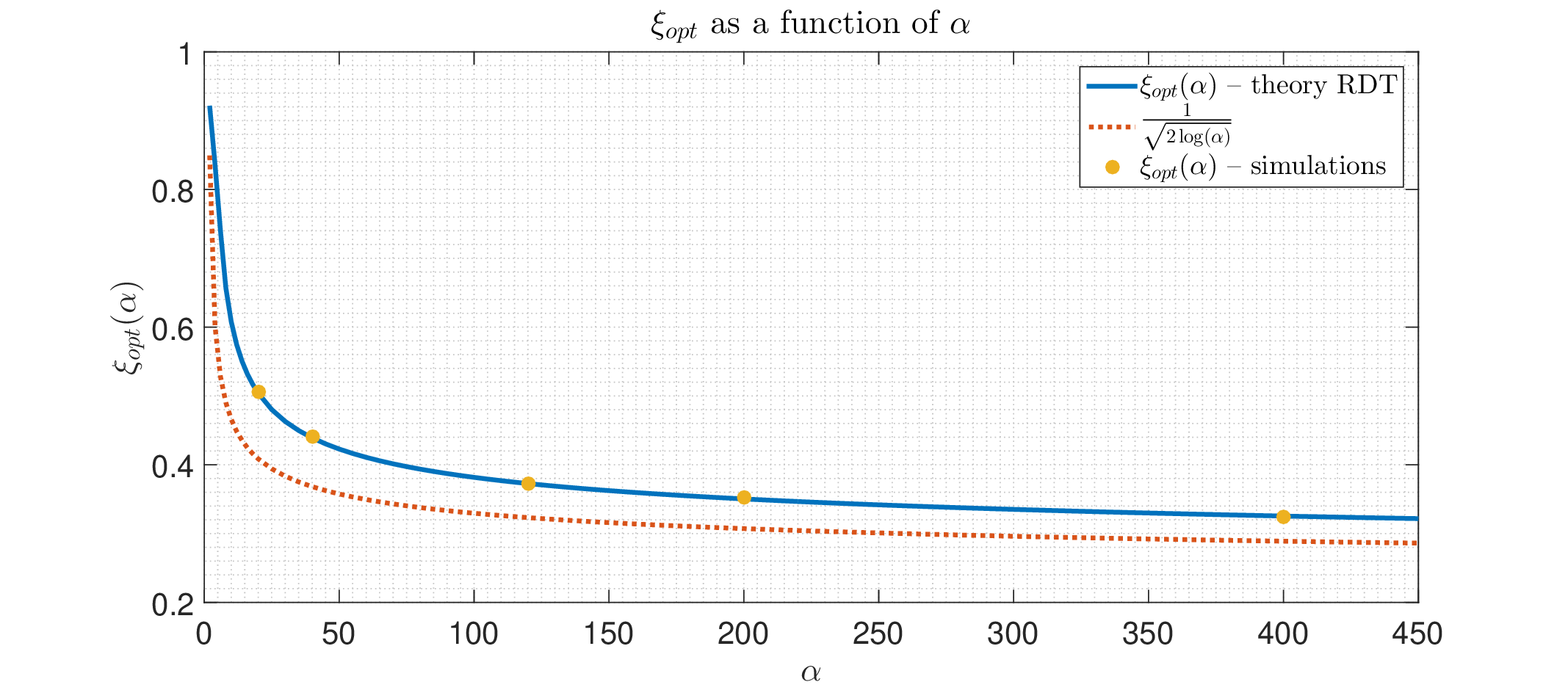}}
\caption{Gaussian polyhedron -- Optimal objective $\xi_{opt}(\alpha)=\mE_A\xi$}
\label{fig:fig1}
\end{figure}

\section{Conclusion}
\label{sec:conc}

We considered \emph{random linear programs} (rlps) which are a subclass  of \emph{random optimization problems} (rops). It was demonstrated how the powerful machinery of \emph{random duality theory} (RDT) \cite{StojnicRegRndDlt10} can be utilized to create a generic framework for the analysis of rlps. We then applied the framework for the analysis of the optimization of a linear objective over the standard Gaussian polyhedrons. For a typically hardest to analyze, so-called large $n$ linear (proportional) regime, we obtained the closed form exact characterization of the program's objective value as a function of a single system parameter $\alpha$ which represents the ratio of the number of polyhedral constraints and the problem's ambient dimension. We also uncovered that the result is insensitive with respect to the particular linearity of the objective, which, due to the rotational symmetry of the matrix of constraints, is to be expected. Moreover, if the vector defining the hyperplane of the objective is chosen uniformly at random from the unit sphere, the corresponding averaged objective value is precisely the mean width of the random polyhedron/polytope. This effectively means that our results automatically provide the characterization of the mean width of the Gaussian's polytopes. Our characterizations are valid for any number of cconstraint/dimension ratios $\alpha$, including the limiting $\alpha\rightarrow\infty$ one. Moreover, in the
 limiting scenario, our results take a particularly simple $\frac{1}{\sqrt{2\log(\alpha)}}$ form and precisely match those obtained for $\alpha\rightarrow\infty$ in \cite{BakOstTik23}.
 
The presented methodology is very generic and many extensions and/or generalizations are possible. For example, besides optimal values of the objective various other properties of optimization problems are of interest. These may relate to behavior of the optimal solutions or critical constraints. Moreover, the randomness can appear in different forms (perturbation or full scale) and can affect different parts of the problem infrastructure. Moreover, generalization are in no way restricted to  \emph{random linear problems} (rlps). They apply equally well to general \emph{random optimization problems} (rops). Various other random structures can be analyzed as well, including, for example, \emph{random feasibility problems} (rfps) (although pretty detailed, the list of random structures discussed in, e.g., \cite{Stojnicsflgscompyx23} is only a small sample of a large collection of  problems that can be handled through the machinery presented here). The associated technical details are often problem specific,  and we discuss them in separate papers.

As  pointed out in \cite{StojnicRegRndDlt10}, the RDT considerations do not require the standard Gaussianity  assumption. Utilizing the assumption, however, made the presentation smoother. While the writing would necessarily be more cumbersome, the needed conceptual adjustments as one deviates from the Gaussianity are rather minimal. In particular, the Lindeberg central limit theorem variant (see, e.g.,  \cite{Lindeberg22}) can be utilized to extend the RDT results to various different statistics. We consider the approach of \cite{Chatterjee06} as particularly elegant in that regard.

\begin{singlespace}
\bibliographystyle{plain}
\bibliography{nflgscompyxRefs}
\end{singlespace}

\end{document}